\begin{document}

\markboth{Raphael de Omena}{Local invariants on an ICIS}

\catchline{}{}{}{}{}

\title{Bruce-Roberts numbers and indices of vector fields on an ICIS}

\author{Raphael de Omena}

\address{Departamento de Matemática \\ Universidade Federal do Ceará \\ Fortaleza, 60455-760, Ceará, Brazil \\ raphael.marinho@mat.ufc.br}

\maketitle

\begin{abstract}
This work
investigates invariants, including the GSV-index, the local Euler obstruction, and the Brasselet number, within the context of isolated complete intersection singularities (ICIS). The goal is to forge connections among these invariants, facilitating the extraction of both topological and geometric insights through algebraic methods. The Bruce-Roberts number assumes a central role in realizing this objective, given its definition via algebraic tools.
\end{abstract}

\keywords{local invariants, ICIS, Bruce-Roberts number, GSV-index, Euler obstruction}


\section{Introduction}\label{sec1}

In the field of singularity theory, comprehending and quantifying the properties of singularities are crucial endeavors. One approach involves exploring invariants associated with these singularities. Throughout this work, we elucidate the definitions and properties of local invariants, highlighting their role in characterizing singularities in complex analytic varieties.  

At the core of our inquiry lies the notion of Milnor number. Let \linebreak $(X,0) = (f^{-1}(0),0)$, where $f: (\mathbb{C}^{n+1},0) \rightarrow (\mathbb{C},0)$, be the germ of a hypersurface with an isolated singularity at the origin. Milnor proved in \cite{john1968singular} that the fiber $X_t = f^{-1}(t) \cap \mathbb{B}_{\varepsilon}$, for a sufficiently small ball $\mathbb{B}_{\varepsilon}$ centered at $0$, has the homotopy type of a bouquet of spheres with a real dimension $n = \text{dim}(X)$. The Milnor number is the count of these spheres and is equal to the length of the analytic algebra $\frac{\mathcal{O}_n}{Jf}$, where $\mathcal{O}_n$ denotes the ring of analytic function-germs at the origin and $J(f)$ is the Jacobian ideal of $f$.

In Section \ref{ICIS}, we recall the concept of complete intersection singularities (ICIS). These singularities emerge as a natural extension of hypersurfaces, characterized by being loci of map germs that preserve the expected codimension. For such singularities, the Milnor number is defined for Hamm in \cite{hamm1971lokale}. The Lê-Greul formula \cite{le1974calculation, greuel1975gauss} provides an algebraic expression for computing the Milnor number, concepts that we revisit.

The Milnor number is known to have different interpretations. It can represent the number of Morse points in a Morsification of $f$, as well as the Poincaré-Hopf index of the complex conjugate of the gradient vector field of $f$. This investigation not only provides topological insights into the singularity but also offers algebraic and geometric information.

Various authors have proposed invariants that extend beyond the Milnor number, providing valuable insights into the described phenomena. In Section \ref{Indices}, we revisit the Indices of Vector Fields, specifically focusing on the GSV-index and the Euler obstruction, which encode geometric information. The authors in \cite{de2024milnor} establish connections between some of these proposed invariants, results that we recall for the case of ICIS.

Furthermore, certain generalizations aim to uncover insights when a function is defined over a singular complex space germ $(X,0)$. This is exemplified by the Bruce-Roberts number, introduced in \cite{bruce1988critical} and revisited in Section \ref{BRnumber}. Defined through an algebraic approach, this number allows us to utilize the \texttt{Singular} software \cite{DGPS} for computation. Our primary aim is to establish connections between the discussed invariants, providing a means to extract topological and geometric information by computing them via the \texttt{Singular} program.

\section{The Milnor number of an ICIS} \label{ICIS}

Complete intersections represent a natural extension of hypersurfaces in the following manner: a hypersurface singularity $(X, 0)$ is a germ set in $\mathbb{C}^{n+1}$ defined as the  locus where a single non-constant holomorphic function $f : (\mathbb{C}^{n+1}, 0) \rightarrow (\mathbb{C}, 0)$ vanishes. This condition implies that $(X,0)$ must possess dimension $n$. Now, if we consider $k$ holomorphic functions $f = ( f_1, \ldots, f_k ): (\mathbb{C}^{n+k}, 0) \rightarrow (\mathbb{C}^{k}, 0)$, it is generally not the case that the zero locus $(X,0)$ of $f$ also has dimension $n$. However, when this does occur, we refer to $(X,0)$ as a complete intersection.

\begin{definition}
    A complex space germ $(X, 0) \subset (\mathbb{C}^n,0)$ is termed a \textbf{complete intersection} if the minimal number of generators of its ideal $\mathcal{I}(X,0)$ in $\mathcal{O}_n$ is $n - dim(X,0)$. An \textbf{isolated complete intersection singularity} (ICIS) refers to a complete intersection $(X, 0)$ such that its singular locus, denoted as $Sing(X)$, coincides with the origin.
\end{definition}

A comprehensive understanding of a singularity involves observing its transformation into less degenerate types under deformation. This  requires examining a continuous family of varieties parametrized, where the original singularity emerges at a certain parameter value. Essentially, to comprehend the essence of a singularity, one must explore a spectrum of perspectives, typically represented by a 1-parameter family. A semiuniversal deformation, a special type of deformation, is characterized by its local simplicity. Roughly speaking, it is a deformation that is universal among those exhibiting the same first-order behavior.

To facilitate our discussion, let $(X,0) \subset (\mathbb{C}^n,0)$ denote an ICIS, with its defining ideal $\mathcal{I}(X,0) = {\langle f_1,\ldots, f_{k} \rangle}$, and $\{e_i\}_{i =1}^n$ representing the canonical basis of $\mathbb{C}^n$. The \textit{Tjurina number} corresponds to the dimension of the base of a semiuniversal deformation of $(X,0)$ and on an ICIS can be computed as follows. This equality is established as Theorem 1.6 in \cite{greuel2007introduction}.
\begin{equation*}
\tau(X,0) = \dim_{\mathbb{C}}\dfrac{\mathcal{O}^k_n}{{\langle df(e_i) \rangle}_{i=1}^n + \mathcal{I}(X,0)\mathcal{O}_n^k}.
\end{equation*}

The investigation into whether a singularity $(X, 0)$ is smoothable holds particular interest for several reasons, notably because the smooth nearby fiber serves as a crucial topological entity associated with the singularity. For any given smoothing, a smooth Milnor fiber emerges, a fundamental topological construct that has received significant attention in research. It is worth noting  that since the base space of the semiuniversal deformation of an ICIS is smooth, there exists a unique Milnor fiber up to diffeomorphism \cite{kas1972versal}, a definition we reiterate here.

\begin{definition}
    Let $(X, 0) = (f^{-1}(0), 0)$, $f : (\mathbb{C}^n, 0) \rightarrow (\mathbb{C}^k, 0)$, be an ICIS. For a small enough $\varepsilon$, the fiber
    \begin{equation*}
        {X}_t = f^{-1}(t) \cap \mathbb{B}_{\varepsilon},
    \end{equation*}
    where $0< |t| <\varepsilon$, and $\mathbb{B}_{\varepsilon}$ is the ball centred in $0$ and radius $\varepsilon$, is called the \textbf{Milnor fiber}.
\end{definition}

In this case, Hamm \cite{hamm1971lokale} establishes that the Milnor fiber of the ICIS $(X,0)$ possesses the homotopy type of a bouquet of spheres of same dimension $dim(X,0) = n-k$, extending Milnor's results \cite{john1968singular}, which focused on hypersurfaces with isolated singularity. The middle Betti number of the Milnor fiber, denoted as $b_n(X_t)$ holds particular significance. 

\begin{definition}
The \textbf{Milnor number} of $(X, 0)$, denoted as $\mu(X, 0):= b_n(X_t)$, representing the number of these spheres. 
\end{definition}

An algebraic formula for $\mu(X, 0)$ has been independently provided by Greul \cite{greuel1975gauss} and Lê \cite{le1974calculation}, which is commonly referred to as the Lê-Greul formula. Let \linebreak $f= (f_1, \ldots, f_k): (\mathbb{C}^n,0) \rightarrow (\mathbb{C}^k,0)$ be a germ of holomorphic mapping defining an ICIS $(X,0) = (f^{-1}(0),0)$, and let $g = (f_1, \ldots, f_{k-1})$. Denote $(Y,0) = (g^{-1}(0),0)$ and $J_k(f)$ as the ideal generated by all $k \times k$ minors of the Jacobian matrix of $f$. Then,
\begin{equation*}
\mu(X,0) + \mu(Y,0) = \dim_{\mathbb{C}} \dfrac{\mathcal{O}_n}{\langle f_1,\ldots, f_{k-1}, J_k(f) \rangle}.
\end{equation*}

\section{Indices of vector fields} \label{Indices}

The investigation of vector fields at isolated singularities lies in understanding their most basic invariant: the local Poincaré–Hopf index. This concept has been extensively explored from various angles, resulting in a large body of research. However,  in a singular variety, a natural question arises: What defines a suitable measure of index for vector fields? Answering this question depends on the specific aspects of the local index that we wish to understand. In this section, we outline some of these explorations. For further details, refer to \cite{brasselet2009vector}.

\subsection{The GSV-index}

An essential characteristic of the local Poincaré–Hopf index is its stability under perturbations. In other words, if we approximate the given vector field by another vector field that only has Morse singularities, then the local index of the initial vector field equals the number of singularities of its Morsification counted with signs. Establishing an index for vector fields on singular varieties that possesses this property leads to the GSV-index, introduced by Gómez-Mont, Seade and Verjovsky in \cite{verjovsky1987index}.The GSV-index is defined on an ICIS as follows.

Let $f = (f_1, \ldots, f_k): (\mathbb{C}^n,0) \rightarrow (\mathbb{C}^k,0)$ be a holomorphic mapping germ defining an ICIS $(X,0) = (f^{-1}(0),0)$. As $0$ is an isolated singularity of $X$, the complex conjugate gradient vector fields ${\overline{\nabla} f_1, \ldots, \overline{\nabla}f_k}$ are linearly independent away from the origin. 
\begin{equation*}
\overline{\nabla}f_i(x):=\left(\overline{\frac{\partial f_i}{\partial x_1}}(x),\ldots, \overline{\frac{\partial f_i}{\partial x_n}}(x)\right).
\end{equation*}

Consider the set $\{v, \overline{\nabla} f_1, \ldots, \overline{\nabla} f_k\}$, which forms a $(k + 1)$-frame on $X \setminus \{0\}$, where $v$ is a continuous vector field on $X$, singular only at the origin.

If $K = X \cap \mathbb{S}_{\varepsilon}$ represents the link of $0$ in $X$, then this link constitutes an oriented real manifold of dimension $2n - 1$. The frame described above defines a continuous map $\varphi_v = (v, \overline{\nabla}f_1, \ldots, \overline{\nabla}f_k) : K \rightarrow W^{k+1}(n+k)$, where $W^{k+1}(n+k)$ denotes the Stiefel manifold of complex $(k+1)$-frames in $\mathbb{C}^{n+k}$. Notably, its first non-zero homology group is isomorphic to $\mathbb{Z}$ in dimension $2n - 1$. The map $\varphi_v$ possesses a well-defined degree $deg(\varphi_v) \in \mathbb{Z}$, determined via the induced homomorphism $H_{2n-1}(K) \rightarrow H_{2n-1}(W^{k+1}(n + k))$ in the usual manner.

\begin{definition}
    The GSV-index of $v$ at $0 \in X$, $ind_{GSV}(v,X,0)$ is the degree of the map $\varphi_v$.
\end{definition}

Utilizing the Transversal Isotopy Lemma, there exists an ambient isotopy of the sphere $\mathbb{S}_{\varepsilon}$ that transforms $K$ onto $\partial X_t$, where $\partial X_t$ is the boundary of the Milnor fiber of $(X,0)$. Consequently, $v$ can be interpreted as a nonsingular vector field on $\partial X_t$. In this context, the degree of $\varphi_v$ is connected to the obstruction of extending $v$ into a tangent vector field on $X_t$, which corresponds to the Poincaré-Hopf index of the Milnor fiber.

\begin{proposition}[Theorem 3.2.1, \cite{brasselet2009vector}]
    The GSV-index of $v$ at $0$ equals to the Poincaré-Hopf index of $v$ in the Milnor fiber:
    \begin{equation*}
        ind_{GSV}(v,X,0) = ind_{PH}(v, X).
    \end{equation*}
\end{proposition}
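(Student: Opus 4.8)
The plan is to realize the integer $\deg(\varphi_v)$ as a sum of local Poincaré--Hopf indices on the Milnor fiber, exploiting that the conjugate gradients frame the normal bundle of $X_t$ throughout the fiber, not merely along its boundary. First I would invoke the Transversal Isotopy Lemma quoted above to identify the link $K$ with $\partial X_t$, so that $v$ becomes a nowhere-zero vector field along $\partial X_t$; after a small homotopy through nonvanishing frames I may assume $v$ is tangent to $X_t$ along the boundary and extends to a smooth tangent field on all of $X_t$ whose zeros $\{p_1,\dots,p_r\}$ are isolated and lie in the interior. Since $t$ is a regular value of $f$, the fiber $X_t$ misses the critical set, so the conjugate gradients $\overline{\nabla}f_1,\dots,\overline{\nabla}f_k$ stay linearly independent and span the normal bundle $\nu$ of $X_t$ in the ambient space over the whole of $X_t$. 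Consequently the frame
\begin{equation*}
\Phi=(v,\overline{\nabla}f_1,\dots,\overline{\nabla}f_k)
\end{equation*}
defines a map $\Phi:X_t\setminus\{p_1,\dots,p_r\}\to W^{k+1}(n+k)$ extending $\varphi_v$.

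Next I would run the standard ``sum of degrees'' argument. Deleting small disjoint open balls $B_j$ around each $p_j$, the restriction of $\Phi$ is defined on the compact manifold-with-boundary $W=X_t\setminus\bigcup_j B_j$, whose oriented boundary is $\partial X_t$ together with the spheres $-\partial B_j$. Because $[\partial W]$ bounds in $W$, its image under $\Phi_*$ vanishes in $H_{2n-1}(W^{k+1}(n+k))\cong\mathbb{Z}$, which yields
\begin{equation*}
\deg(\varphi_v)=\sum_{j=1}^r \deg\!\left(\Phi|_{\partial B_j}\right).
\end{equation*}

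It then remains to identify each boundary degree with the local index $ind_{p_j}(v)$, and this is where the main work lies. Here I would use the fibration $S^{2n-1}\hookrightarrow W^{k+1}(n+k)\xrightarrow{p} W^k(n+k)$ obtained by forgetting the first vector, whose base $W^k(n+k)$ is $2n$-connected and whose fiber inclusion therefore induces an isomorphism on $H_{2n-1}$. Over the contractible ball $B_j$ the normal frame $p\circ\Phi=(\overline{\nabla}f_1,\dots,\overline{\nabla}f_k)$ extends, so its restriction to $\partial B_j$ is null-homotopic; hence $\deg(\Phi|_{\partial B_j})$ is detected entirely in the fiber. Trivializing $\nu$ over $B_j$ identifies this fiber with the unit sphere of $TX_t$, and the fiber part of $\Phi$ then records the normalized tangent field $v/|v|:\partial B_j\to S^{2n-1}$, whose degree is by definition the local Poincaré--Hopf index of $v$ at $p_j$. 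Summing gives $\deg(\varphi_v)=\sum_j ind_{p_j}(v)=ind_{PH}(v,X_t)$.

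The principal obstacle is this last identification: one must check that the fiber-degree extracted from the Stiefel map genuinely coincides with the tangential local index, which requires pinning down the orientation conventions on $W^{k+1}(n+k)$, on the fiber $S^{2n-1}$, and on $TX_t$ so that the equality holds on the nose rather than up to sign, and verifying that the choice of trivialization of $\nu$ over each $B_j$ is homotopically harmless. One should also confirm that the homotopy making $v$ tangent to $X_t$ leaves $\deg(\varphi_v)$ unchanged, which follows from the homotopy invariance of the degree together with the linear independence of $v$ and the gradient frame along the boundary.
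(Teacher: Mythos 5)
Your argument is correct and follows exactly the route the paper sketches in the paragraph preceding the statement (the paper itself offers no proof, quoting Theorem 3.2.1 of Brasselet--Seade--Suwa): identify $K$ with $\partial X_t$ via the Transversal Isotopy Lemma, extend $v$ tangentially over $X_t$ with isolated zeros, and localize the degree of $\varphi_v$ at those zeros using that the conjugate gradients frame the normal bundle of the smooth fiber. Your use of the fibration $S^{2n-1}\hookrightarrow W^{k+1}(n+k)\to W^{k}(n+k)$, with the base $2n$-connected, is the standard way to reduce each local contribution to the degree of $v/|v|$, and you correctly flag the orientation conventions as the only point requiring care.
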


Another noteworthy property of the Poincaré–Hopf index on a germ of a holomorphic vector field $v = \sum_{i=1}^{n} h_i \frac{\partial}{\partial z_i}$ on $\mathbb{C}^{n+1}$ with an isolated singularity at $0$ is that the local index equals the integer:

\begin{equation*}
    \dim_{\mathbb{C}} \dfrac{\mathcal{O}_n}{\left<h_1, \dots, h_n\right>},
\end{equation*}
where $\left<h_1, \dots, h_n\right>$ is the ideal generated by the components of $v$ in the ring $\mathcal{O}_n$. When considering $v$ as the complex conjugate of the gradient vector field of a function $f : (\mathbb{C}^{n+1}, 0) \rightarrow (\mathbb{C}, 0)$, it follows that the Poincaré-Hopf index of $v$ aligns with the Milnor number associated with the hypersurface defined by $f$. Hence, the GSV-index can be interpreted as an extension or generalization of the concept of the Milnor number.

Moreover, this characterization motivates the search for algebraic formulas for the index of vector fields on singular varieties. The homological index, defined by Gómez-Mont in \cite{gomez1998algebraic},  provides a solution to this inquiry. It considers an isolated singularity germ $(X, 0)$ of arbitrary dimension, accompanied by a holomorphic vector field on $X$ that is singular only at $0$. Within this context, we employ Kähler differentials on $X$ and construct a Koszul complex $(\Omega^\bullet_{X, 0}, v)$:
\begin{equation*}
  0 \rightarrow \Omega^n_{X, 0}\rightarrow \Omega^{n-1}_{X, 0} \rightarrow \cdots \rightarrow \mathcal{O}_{X,0} \rightarrow 0,   
\end{equation*}
where the arrows denote contraction of forms by the vector field $v$. The homological index of $v$ is then defined as the Euler characteristic of this complex.

\begin{definition}
    The \textbf{homological index}, denoted as $ind_{hom}(v, X, 0)$, of the holomorphic vector field $v$ on $(X, 0)$ is defined as:

\begin{equation*}
    ind_{hom}(v, X, 0) = \sum_{i=0}^{n} (-1)^i h_i(\Omega^\bullet_{X, 0}, v),
\end{equation*}
where $h_i(\Omega^\bullet_{X, 0}, v)$ represents the dimension of the $i$-th homology group of the complex as a vector space over $\mathbb{C}$.

\end{definition}

The authors in \cite{bothmer2008algebraic} have demonstrated that the homological index corresponds to the GSV-index for an ICIS. With this understanding, we refer to the GSV-index in our discussion. It is worth noting that while the homological index is defined for vector fields on arbitrary isolated normal singularity germs, the GSV-index is specifically defined on complete intersection germs.

\subsection{The local Euler obstruction}

The notion of the local Euler obstruction was initially pioneered by MacPherson \cite{macpherson1974chern}, serving as a fundamental component in the development of characteristic classes for singular complex algebraic varieties. An alternative definition was proposed by Brasselet and Schwartz \cite{brasselet1981classes}, employing vector fields, which places the local Euler obstruction within the context of indices of vector fields on singular varieties. This approach aligns well with our present investigation.

Let $(X,0)\subset(\mathbb{C}^n,0)$ denote an equidimensional reduced complex analytic germ of dimension $d$ in an open set $U\subset\mathbb{C}^n$. Here, $G(d,n)$ represents the Grassmannian manifold and we consider the Gauss map $\phi: X_{reg}\rightarrow U\times G(d,n)$, defined as \linebreak $x\mapsto(x,T_x(X_{reg})).$

\begin{definition}
The closure of the image of the Gauss map $\phi$ in $U\times G(d,n)$, denoted by $\tilde{X}$, is termed the \textbf{Nash modification} of $X$. This constitutes a complex analytic space equipped with a holomorphic projection map $\nu:\tilde{X}\rightarrow X$.
\end{definition}

We proceed to consider the extension of the tautological bundle $\mathcal{T}$ over $U\times G(d,n)$. Since $\tilde{X}\subset U\times G(d,n)$, we denote by $\widetilde {\mathcal{T}}$ the restriction of $\mathcal{T}$ to $\tilde{X}$, referred to as the \textit{Nash bundle}. Brasselet and Schwartz \cite{brasselet1981classes} demonstrated that each stratified vector field $v$, which is non-null on a subset $A \subset X$, possesses a canonical lifting to a non-null section $\tilde{v}$ of the Nash bundle $\widetilde {\mathcal{T}}$ over $\nu^{-1}(A) \subset \widetilde{X}$.

Now, let us consider a stratified radial vector field $v(x)$ in a neighborhood \linebreak 
 $\mathbb{S}_{\varepsilon} := \partial{\mathbb{B}_{\varepsilon}}$ of $\{0\}$ in $X$ and let $\tilde{v}$ be its lifting on $\nu^{-1}(X \cap \mathbb{S}_{\varepsilon})$ to a section of the Nash bundle. Denote $\mathcal{O}{(\tilde{v})} \in H^{2d}(\nu^{-1}(X \cap
{\mathbb{B}_{\varepsilon}}), \nu^{-1}(X \cap {\mathbb{S}_{\varepsilon}}), \mathbb{Z})$ as the
obstruction cocycle for extending $\tilde{v}$ as a nowhere-zero section of
$\widetilde {\mathcal{T}}$ inside $\nu^{-1}(X\cap {\mathbb{B}_{\varepsilon}})$.

\begin{definition}
    The \textbf{local Euler obstruction} ${\rm Eu}_{X}(0)$ is defined as the evaluation of the cocycle $\mathcal{O}(\tilde{v})$ on the fundamental class of the pair $[\nu^{-1}(X \cap {\mathbb{B}_{\varepsilon}}), \nu^{-1}(X \cap {\mathbb{S}_{\varepsilon}})]$, therefore it is an integer.
\end{definition}

The determination of the local Euler obstruction often relies on topological methods, including the application of the Euler characteristic. Brasselet, Lê, and Seade have outlined a formula for this purpose in their work \cite{brasselet2000euler}.

 We turn our attention to a vector field defined by a holomorphic function \linebreak $f:X\rightarrow\mathbb{C}$ with an isolated singularity at the origin, obtained as the restriction of a holomorphic function $F:U\rightarrow\mathbb{C}$. Denote by $\overline{\nabla}F(x)$ the conjugate of the gradient vector field of $F$ at $x\in U$.

Since $f$ has an isolated singularity at the origin, for all $x\in X\setminus \{0\}$, the projection ${\zeta}_i(x)$ of $\overline{\nabla}F(x)$ onto $T_x(V_i(x))$ is nonzero, where $V_i(x)$ is a stratum containing $x$. Utilizing this projection, Brasselet, Massey, Parameswaran, and Seade constructed, in \cite{BMPS}, a stratified vector field on $X$, denoted by $\overline{\nabla}_Xf$. Let $\tilde{\overline{\nabla}}_Xf$ be the lifting of $\overline{\nabla}_Xf$ as a section of the Nash bundle $\widetilde {\mathcal{T}}$ over $\tilde{X}$, without singularity on $\nu^{-1}(X\cap \mathbb{S}_{\varepsilon})$.

\begin{definition}
The \textbf{local Euler obstruction of the function} $f$, denoted as ${\rm Eu}_{f,X}(0)$, is the evaluation of $\mathcal{O}(\tilde{\overline{\nabla}}_Xf)$ on the fundamental class \linebreak $[\nu^{-1}(X\cap \mathbb{B}_{\varepsilon}),\nu^{-1}(X\cap \mathbb{S}_{\varepsilon})]$.
\end{definition}

The local Euler obstruction of a function serves as a generalization of the Milnor number in certain aspects. Seade, Tib{\u{a}}r, and Verjovsky demonstrated in \cite{seade2005milnor} that ${\rm Eu}_{f,X}(0)$, up to sign, corresponds to the number of Morse critical points in a Morsification of $f$. 

Brasselet, Massey, Parameswaran, and Seade \cite{BMPS} also introduce the concept of the \textit{defect} when $f$ may have a non-isolated singularity at the origin, which coincides with the Euler obstruction of $f$ in the case of an isolated singularity. The defect of $f$ is closely related to another invariant defined by Dutertre and Grulha \cite{dutertre2014le}.

The \textit{Brasselet number} for a holomorphic function $f:(X,0)\rightarrow(\mathbb{C},0)$ is defined as follows:
\begin{equation*}
{\rm B}_{f,X}(0)=\sum_{i=1}^{q}\chi(V_i\cap f^{-1}(\delta)\cap \mathbb{B}_{\varepsilon}){\rm Eu}_X(V_i),
\end{equation*}
where $\mathcal{V} = {V_{i}}$ represents a suitable stratification. In particular, if $f$ has an isolated singularity, the Brasselet number simplifies to ${\rm B}_{f,X}(0)= {\rm Eu}_X(0) - {\rm Eu}_{f,X}(0)$.

A natural extension of the local Euler obstruction, applied to a stratified vector field $v$ at an isolated singularity in $X$, can be obtained by considering a stratified $k$-field $v^{(k)} = (v_1, \dots, v_k)$. For further details, refer to \cite{brasselet2009vector}. The class of the obstruction cocycle, intended to extend the lifting $\tilde{v}^{(k)}$ to a $k$-field without singularity over \linebreak $\nu^{-1}(\sigma \cap X)$, is denoted by $\mathcal{O}(\tilde{v}^{(k)}) \in H^{2(d-k+1)}(\nu^{-1}(\sigma \cap X), \nu^{-1}(\partial \sigma \cap X))$.The \textit{local Euler obstruction} ${\rm Eu}_X(v^{(k)},0)$ of the stratified $k$-field $v^{(k)}$ at an isolated singularity at the origin is defined as the integer obtained by evaluating $\mathcal{O}(\tilde{v}^{(k)})$ on the fundamental class $[(\nu^{-1}(\sigma \cap X), \nu^{-1}(\partial \sigma \cap X))]$.

Let $f: X \rightarrow \mathbb{C}^k$ be a holomorphic map with singular set $Sing(f)$. Grulha \cite{junior2007obstruccao} constructs a stratified $k$-field $\overline{\nabla}_X^{(k)}f$ over $\mathbb{S}_{\varepsilon}(0) \cap X \setminus Sing(f)$, without singularity. If $\sigma$ is a cell with barycenter $0$ such that $\sigma \cap Sing(f) = \emptyset$, then the $k$-field admits a lifting $\tilde{\overline{\nabla}}_X^{(k)}f$ as a section of $\widetilde {\mathcal{T}}$ over $\nu^{-1}(\partial \sigma \cap X)$. 

\begin{definition}
Let $\sigma$ be a generic cell, and let $\overline{\nabla}_X^{(k)}f$ be as above. The \textbf{Euler obstruction of the map} $f$ at the origin is ${\rm Eu}_{f,X}(0) = {\rm Eu}_X(\overline{\nabla}_X^{(k)}f, 0)$.
\end{definition} 

Brasselet, Grulha, and Ruas \cite{brasselet2010euler}  demonstrated that the Euler obstruction of a map remains independent of a generic choice of $\sigma$. Grulha, Santana, and Ruiz \cite{grulha2022geometrical} express the Euler obstruction of the holomorphic map-germ $f:(X,0) \rightarrow (\mathbb{C}^2,0)$ in terms of the number of critical points of a suitable Morsification. Building upon these findings, the authors in \cite{de2024milnor} established connections between the discussed invariants in the context of an isolated determinantal singularity, which represents a generalization of an ICIS. To elucidate, we restate this as follows.

Consider $f = (f_1,f_2): (X,0) \rightarrow (\mathbb{C}^2,0)$ a germ of holomorphic mapping, which is the restriction of $F = (F_1,F_2) :(U,0) \rightarrow (\mathbb{C}^2,0)$, where $U \subset \mathbb{C}^{n+1}$ is an open set. We denote by $H = f^{-1}_ 2(0)$ and by $H_{\delta} = f^{-1}_ 2(\delta)$.

\begin{theorem}[\cite{de2024milnor}] \label{invariantsofmap}
Let $(X,0) \subset (\mathbb{C}^{n+1},0)$ be an ICIS of dimension $d+1$, with $d> 1$, and let $f: (X,0) \rightarrow (\mathbb{C}^2,0)$ be as above, such that ${F_2}_{\lvert{X}} = f_2$ is linear, and ${F_1}_{\lvert{Y}} = f_1$ has an isolated singularity at the origin, where $Y = X \cap H$. Then, the Euler obstruction ${\rm Eu}_{f, X\cap H_{\delta}}(x)$ of the map $f$ at a point $x \in X \cap H_{\delta}$ is equal to the following equivalent expressions:
\begin{itemize}
\item[(a)] $ind_{GSV}(\nabla f_1,Y,0)$;
\item[(b)] $\mu(Y,0) + \mu(Y \cap f^{-1}_1(0),0)$;
\item[(c)] $\mu(Y \cap f^{-1}_1(0),0) + (-1)^d({\rm Eu}_X(0) + 1);$
\item[(d)] $\mu(Y \cap f^{-1}_1(0),0) + m_d(Y) + (-1)^d({\rm Eu}_Y(0) -1)$;
\item[(e)] $\mu(Y,0) + \mu(Y \cap p^{-1}(0),0) + (-1)^d {\rm Eu}_{f_1,Y}(0)$, where $p: \mathbb{C}^{n} \rightarrow \mathbb{C}$ is a generic linear function with respect to $Y$;
\item[(f)] $m_d(Y) + (-1)^d {\rm Eu}_{f_1,Y}(0)$;
\item[(g)] $\mu(Y,0) + (-1)^{d-1}({\rm B}_{f_1,Y}(0) -1)$.
\end{itemize}
\end{theorem}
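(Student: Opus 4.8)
\noindent The plan is to prove that all seven expressions coincide by building one chain of identities: the leftmost geometric quantity ${\rm Eu}_{f,X\cap H_{\delta}}(x)$ is matched with (a), and then (a)--(g) are obtained from one another by substituting the Milnor-number and Euler-obstruction formulas recalled in Sections \ref{ICIS} and \ref{Indices}. The initial reduction rests on the linearity of $f_2={F_2}_{|X}$: the fibers $X\cap H_{\delta}$ form the smoothing of the ICIS $Y=X\cap H$ obtained by translating the hyperplane $f_2^{-1}(0)$ to $f_2^{-1}(\delta)$, so $X\cap H_{\delta}$ is diffeomorphic to the Milnor fiber of $Y$ and is in particular smooth. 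On this fiber the component $f_2$ is constant, hence the local structure of the map $f$ at $x\in X\cap H_{\delta}$ is governed entirely by $f_1$, and we use that the linear form $f_2$ is generic enough that $Y$ is a generic hyperplane section of $X$.

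First I would prove ${\rm Eu}_{f,X\cap H_{\delta}}(x)=ind_{GSV}(\nabla f_1,Y,0)$, that is, expression (a). Using Grulha's stratified $k$-field $\overline{\nabla}_X^{(k)}f$ from \cite{junior2007obstruccao}, the independence of the generic cell established in \cite{brasselet2010euler}, and the Morse-theoretic description of the Euler obstruction of a map in \cite{grulha2022geometrical}, the number ${\rm Eu}_{f,X\cap H_{\delta}}(x)$ equals, up to sign, the count of Morse critical points of a generic perturbation of $f_1$ on $X\cap H_{\delta}$. Since $X\cap H_{\delta}$ realizes the Milnor fiber of $Y$, this count is the Poincar\'e-Hopf index of the conjugate gradient of $f_1$ on that fiber, which by the Proposition of Section \ref{Indices} equals $ind_{GSV}(\nabla f_1,Y,0)$. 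The equality (a)$=$(b) then follows from the coincidence of the GSV-index with the homological index for an ICIS \cite{bothmer2008algebraic} together with the L\^e-Greuel formula applied to the gradient of $f_1$ on $Y$, giving $ind_{GSV}(\nabla f_1,Y,0)=\mu(Y,0)+\mu(Y\cap f_1^{-1}(0),0)$.

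The remaining identities are formal substitutions of three ingredients. The first is the Brasselet-L\^e-Seade formula \cite{brasselet2000euler}, which for an ICIS with isolated singularity expresses the Euler obstruction as the Euler characteristic of the complex link, that is, of the Milnor fiber of a generic hyperplane section; applying it to $X$, to $Y$, and the same Euler-characteristic computation to $f_1|_Y$ expresses ${\rm Eu}_X(0)$, ${\rm Eu}_Y(0)$ and ${\rm B}_{f_1,Y}(0)$ respectively as $1$ plus a signed Milnor number of the appropriate hyperplane or level section. The second is the L\^e-Greuel expression $m_d(Y)=\mu(Y,0)+\mu(Y\cap p^{-1}(0),0)$ for the top polar multiplicity via a generic linear $p$. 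The third is the Brasselet-number relation ${\rm B}_{f_1,Y}(0)={\rm Eu}_Y(0)-{\rm Eu}_{f_1,Y}(0)$ recalled in Section \ref{Indices}. Concretely: substituting the first ingredient for $X$ into (b) produces (c); combining it for $Y$ with the polar-multiplicity formula passes from (c) to (d); the polar-multiplicity formula alone identifies (e) with (f); and the Brasselet-number relation, together with the Euler-characteristic formula for $f_1|_Y$, links (d) and (f) to (g), keeping track of the signs $(-1)^d$ and $(-1)^{d-1}$ dictated by $d=\dim Y$.

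The hard part will be the first identity ${\rm Eu}_{f,X\cap H_{\delta}}(x)=ind_{GSV}(\nabla f_1,Y,0)$, since it is the only step carrying genuine geometric content rather than algebraic bookkeeping. One must pass from the obstruction-theoretic definition of the Euler obstruction of the map, given through liftings to the Nash bundle over a generic cell with barycenter $x$, to an index computation on the smooth Milnor fiber, and verify that the generic perturbation realizing the Morse count is simultaneously compatible with Grulha's $k$-field and with the $(k+1)$-frame defining the GSV-index. Once this bridge is secured, the rest of the theorem reduces to the substitutions above; the only residual subtlety is the disciplined handling of the orientation-dependent signs, which must be matched against the parity of $d$ and the sign conventions in the definitions of the local Euler obstruction and the Brasselet number.
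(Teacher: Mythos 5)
The paper does not actually prove this theorem: it is imported verbatim from \cite{de2024milnor} (where it is established for the larger class of isolated determinantal singularities), so there is no in-paper argument to measure you against. Judged on its own terms, your outline follows what is evidently the intended route --- reduce ${\rm Eu}_{f,X\cap H_{\delta}}(x)$ to a Morse count on the slice via \cite{grulha2022geometrical}, identify the slice with the Milnor fibre of $Y=X\cap H$, convert the count into $ind_{GSV}(\nabla f_1,Y,0)$ and then into $\mu(Y,0)+\mu(Y\cap f_1^{-1}(0),0)$ by L\^e--Greuel, and obtain (c)--(g) by substituting the Brasselet--L\^e--Seade formula, Gaffney's identity $m_d(Y)=\mu(Y,0)+\mu(Y\cap p^{-1}(0),0)$, and ${\rm B}_{f_1,Y}(0)={\rm Eu}_Y(0)-{\rm Eu}_{f_1,Y}(0)$. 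Your bookkeeping for (d)--(g) checks out.

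Two points need attention. First, the step you yourself single out as hard is left as a programme: you propose to re-verify that the Morsification is ``compatible with Grulha's $k$-field and with the $(k+1)$-frame defining the GSV-index,'' but that is not how the bridge is built. The theorem of \cite{grulha2022geometrical} already delivers ${\rm Eu}_{f,X\cap H_{\delta}}(x)$ as the exact number (no sign) of stratified Morse points, on the regular stratum, of a Morsification of $f_1$ restricted to $X\cap H_{\delta}\cap\mathbb{B}_{\varepsilon}$; since $X\cap H_{\delta}$ is smooth, these are ordinary complex Morse points, each contributing $+1$, so the count is the Poincar\'e--Hopf index of $\overline{\nabla}f_1$ on the Milnor fibre of $Y$, which is $ind_{GSV}(\nabla f_1,Y,0)$ by Theorem 3.2.1 of \cite{brasselet2009vector}. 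You should invoke that result directly rather than attempt to redo the obstruction-theoretic comparison, which would be a substantial piece of work in itself. Second, your substitution for (c) uses ${\rm Eu}_X(0)=1+(-1)^{d}\mu(X\cap \ell^{-1}(0),0)$ for a generic linear form $\ell$; this requires the hyperplane $H=f_2^{-1}(0)$ to be generic with respect to $X$, an assumption you acknowledge only in passing (``generic enough'') and which must be made an explicit hypothesis. Note also that this substitution yields $(-1)^{d}\bigl({\rm Eu}_X(0)-1\bigr)$, whereas item (c) as printed reads $(-1)^{d}\bigl({\rm Eu}_X(0)+1\bigr)$; your version is the one consistent with (b) and (d), so you should flag the discrepancy rather than silently deriving a formula that differs from the stated one by $2(-1)^{d}$.
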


In the above theorem, $m_d(X)$ represents the \textit{$d$-th polar multiplicity}, as defined by Gaffney \cite{gaffney1993polar}. If $X$ possesses a unique smoothing, then the polar multiplicity depends exclusively on $X$,  making it an invariant of the analytic variety $X$.

\begin{remark}
    As previously mentioned, Theorem \ref{invariantsofmap} was originally formulated for isolated determinantal singularities. In this case, the so-called Poincaré-Hopf-Nash index is well-defined, which, in the case of ICIS, coincides with the GSV-index.
\end{remark}

\section{The Bruce-Roberts number} \label{BRnumber}

The Milnor number is one of the key invariants in singularity Theory due to its good properties, as discussed before. Besides, it is known that $\mu(f)$ is finite if, and only if, $f$ has an isolated singularity, which is equivalent to $f$ being $\mathcal{R}$-finitely determined, where $f: (\mathbb{C}^n,0) \rightarrow (\mathbb{C},0)$ is a germ of holomorphic function. Here, $\mu(f)$ denotes the Milnor number of $(V(f),0) = (f^{-1}(0),0)$.

In this sense, there exists an invariant that maintains the aforementioned property of the Milnor number on germs of functions defined on singular complex space germs $(X,0) \subset (\mathbb{C}^n,0)$, namely, its finiteness is equivalent to $f$ having an isolated singularity on the analytic space $(X,0)$, which in turn is equivalent to $f$ being $\mathcal{R}_X$-finitely determined, where $\mathcal{R}_X$ is the subgroup of $\mathcal{R}$ consisting of coordinate changes preserving $X$. This is the case of the number introduced by Bruce and Roberts in \cite{bruce1988critical}.

Let $(X,0) \subset (\mathbb{C}^n,0)$ be a germ of reduced analytic variety, and let $\Theta_{n}$ be the \linebreak $\mathcal{O}_n$-module of germs of vector fields on $(\mathbb{C}^n,0)$. The set of germs of vector fields in $\mathbb{C}^n$ that are tangent to $(X,0)$ is defined by
\begin{equation*}
\Theta_{X} := \{\xi \in \Theta_{n}; dh(\xi) \in \mathcal{I}(X,0), \forall h \in \mathcal{I}(X,0) \}.
\end{equation*}

If we consider $X = \mathbb{C}^n$, then $\Theta_{X} = \Theta_{n}$. In this case, the image of $\Theta_{X}$ under the differential $df$ of a germ of holomorphic function $f: (\mathbb{C}^n,0) \rightarrow (\mathbb{C},0)$ is given by $df(\Theta_{X}) = Jf$. In this way, the subsequent definition extends the concept of the Milnor number.

\begin{definition}
Let $(X,0) \subset (\mathbb{C}^n,0)$ be a germ of reduced analytic variety and $f \in \mathcal{O}_n$. The \textbf{Bruce-Roberts number} of $f$ with respect to the variety $(X,0)$ is defined as
\begin{equation*}
\mu_{BR}(f,X):= dim_{\mathbb{C}}\dfrac{\mathcal{O}_n}{df(\Theta_{X})}.
\end{equation*}

The \textbf{relative Bruce-Roberts number} of $f$ with respect to $(X,0)$ is defined as 
\begin{equation*}
    \overline{\mu_{BR}}(f,X):= \dim_{\mathbb{C}}\dfrac{\mathcal{O}_n}{df(\Theta_{X}) + \mathcal{I}(X,0)}.
\end{equation*}
\end{definition}

Even though there is no explicit form for the generators of $\Theta_X$ when $(X, 0)$ is an analytic variety, we can utilize software tools like \texttt{Singular} to compute these generators. Below is the step-by-step process, as formulated in \cite{lima2022numero}.

Let us assume that the ideal $\mathcal{I}(X,0)$ is generated by $f_1, \ldots, f_k$ germs of functions in $(\mathbb{C}^n, 0)$. Our goal is to find vector fields $\xi \in \Theta_{n}$ such that:

\begin{equation*}
\sum_{i=1}^{n} \xi_i \frac{\partial f_r}{\partial x_i} - \lambda_{1r}f_1 - \cdots - \lambda_{kr}f_k = 0.
\end{equation*}
for every $r = 1, \ldots, k$.

For each $r = 1, \ldots, k$, denote
\begin{equation*}
T_r = \text{syz}\left( \left< \frac{\partial f_r}{\partial x_1}, \ldots, \frac{\partial f_r}{\partial x_n}, f_1, \ldots, f_k \right>\right),
\end{equation*}
where $\text{syz}$ denotes the syzygy module, which can be computed using \texttt{Singular}.

Note that $T_r$ is a submodule of $\mathcal{O}^{n+k}_n$. By projecting its first $n$ coordinates, we obtain a submodule of $\Theta_{n}$ denoted by $T_{rn}$. To determine the generators of $\Theta_X$, it suffices to take the intersection $\bigcap_{i=1}^k T_{in}$.

In \cite{lima2023bruce}, the authors investigate the Bruce-Roberts number and the relative Bruce-Roberts number of a germ of function $f: (\mathbb{C}^n,0) \rightarrow (\mathbb{C},0)$ over an ICIS \linebreak $(X,0) \subset (\mathbb{C}^n,0)$, relating them to the Milnor number and the Tjurina number. For convenience, we state them here.

\begin{theorem}[\cite{lima2023bruce}] \label{BruceRoberts} Let $(X,0)$ be an ICIS and $f \in \mathcal{O}_n$ be $\mathcal{R}_X$-finitely determined. Then,
\begin{equation*}
\mu_{BR}(f,X) = \mu(f) + \mu(X \cap f^{-1}(0),0) + \mu(X,0) - \tau(X,0) - \dim_{\mathbb{C}}\dfrac{\mathcal{O}_n}{Jf + \mathcal{I}(X,0)} + \dim_{\mathbb{C}}\dfrac{\mathcal{I}(X,0) \cap Jf}{\mathcal{I}(X,0)Jf}.
\end{equation*}
\end{theorem}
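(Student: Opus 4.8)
The plan is to compute the colength $\mu_{BR}(f,X)=\dim_{\mathbb C}\mathcal O_n/df(\Theta_X)$ by squeezing the ideal $df(\Theta_X)$ between two more tractable ideals. Writing $\mathcal I=\mathcal I(X,0)=\langle g_1,\dots,g_k\rangle$, the first thing I would record are the inclusions
\[
\mathcal I\cdot Jf\ \subseteq\ df(\Theta_X)\ \subseteq\ Jf .
\]
The right inclusion holds because $\Theta_X\subseteq\Theta_n$ and $df(\Theta_n)=Jf$; the left one because, for $h\in\mathcal I$, the field $h\,\partial/\partial x_i$ lies in $\Theta_X$ and is sent by $df$ to $h\,\partial f/\partial x_i$. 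Using additivity of $\dim_{\mathbb C}$ along the short exact sequences attached to the chains $\mathcal I\cap df(\Theta_X)\subseteq\mathcal I\cap Jf\subseteq\mathcal I$ and $\mathcal I\,Jf\subseteq\mathcal I\cap df(\Theta_X)\subseteq\mathcal I\cap Jf$, together with $\mu(f)=\dim_{\mathbb C}\mathcal O_n/Jf$ (valid since $f$ has an isolated singularity), I obtain
\[
\mu_{BR}(f,X)=\mu(f)-\dim_{\mathbb C}\frac{\mathcal O_n}{\mathcal I+Jf}+\dim_{\mathbb C}\frac{\mathcal I\cap Jf}{\mathcal I\,Jf}+\overline{\mu_{BR}}(f,X)-\dim_{\mathbb C}\frac{\mathcal I\cap df(\Theta_X)}{\mathcal I\,Jf}.
\]
Comparing with the asserted formula, the theorem becomes equivalent to the identity
\[
\overline{\mu_{BR}}(f,X)-\dim_{\mathbb C}\frac{\mathcal I\cap df(\Theta_X)}{\mathcal I\,Jf}=\mu\bigl(X\cap f^{-1}(0),0\bigr)+\mu(X,0)-\tau(X,0).\qquad(\star)
\]

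To bring in the Milnor numbers I would next introduce the submodule $\Theta_X^{0}=\{\xi\in\Theta_n:\ dg_j(\xi)=0\text{ for all }j\}\subseteq\Theta_X$ of fields tangent to every fibre of $g=(g_1,\dots,g_k)$. A Cramer's-rule computation identifies $df(\Theta_X^{0})$ with the ideal $J_{k+1}(g,f)$ generated by the maximal minors of the Jacobian matrix of $(g,f)$, whence the Lê--Greuel formula yields
\[
\mu\bigl(X\cap f^{-1}(0),0\bigr)+\mu(X,0)=\dim_{\mathbb C}\frac{\mathcal O_n}{\mathcal I+df(\Theta_X^{0})} .
\]
Substituting this into $(\star)$, expanding $\overline{\mu_{BR}}(f,X)=\dim_{\mathbb C}\mathcal O_n/(\mathcal I+df(\Theta_X))$, and using $df(\Theta_X^{0})\subseteq df(\Theta_X)$ to split the colength, the two copies of $\overline{\mu_{BR}}$ cancel and $(\star)$ collapses to the single ideal-theoretic identity
\[
\tau(X,0)=\dim_{\mathbb C}\frac{\mathcal I+df(\Theta_X)}{\mathcal I+df(\Theta_X^{0})}+\dim_{\mathbb C}\frac{\mathcal I\cap df(\Theta_X)}{\mathcal I\,Jf}.\qquad(\star\star)
\]

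I would then prove $(\star\star)$ by analysing the $\mathcal O_X$-module $\Theta_X/\Theta_X^{0}$, which the Jacobian $Dg$ of $g$ identifies with $\mathrm{im}(Dg)\cap\mathcal I\,\mathcal O_n^{k}$, together with the map it induces under $df$. The two right-hand terms of $(\star\star)$ are precisely the parts of $df(\Theta_X)$ lying beyond $df(\Theta_X^{0})$ as seen, respectively, modulo $\mathcal I$ and inside $\mathcal I$; a snake-lemma diagram chase should relate their sum to the Tjurina algebra $\mathcal O_n^{k}/\bigl(\mathrm{im}(Dg)+\mathcal I\,\mathcal O_n^{k}\bigr)$, whose length is $\tau(X,0)$ by the Tjurina formula recalled in Section~\ref{ICIS}.

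I expect the main obstacle to be this last step. Matching the combined defect in $(\star\star)$ with $\tau(X,0)$ requires the connecting homomorphisms in the diagram chase to be isomorphisms, and it is here that the hypotheses---$(X,0)$ an ICIS and $f$ being $\mathcal R_X$-finitely determined---are used in an essential way, to guarantee that the modules involved have finite length and that $\Theta_X/\Theta_X^{0}$ meets $\mathcal I\,\mathcal O_n^{k}$ in the expected manner. Subsidiary care will be needed to justify the Cramer identification $df(\Theta_X^{0})=J_{k+1}(g,f)$ (the kernel of $Dg$ must be generated by the minor fields up to the relevant colengths) and the applicability of the Lê--Greuel formula to $X\cap f^{-1}(0)$; finiteness of every term throughout rests on $f$ having an isolated singularity in $\mathbb C^{n}$.
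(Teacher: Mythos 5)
First, note that the paper you are comparing against does not actually prove this statement: Theorem \ref{BruceRoberts} is quoted from \cite{lima2023bruce} without proof, so your proposal can only be measured against the argument in that reference. Your bookkeeping is correct as far as it goes: the inclusions $\mathcal{I}Jf \subseteq df(\Theta_X) \subseteq Jf$ hold for the reasons you give, the additivity computation leading to your first displayed identity is right, and the reduction to $(\star)$ and then to $(\star\star)$ is arithmetically sound. Indeed, since $\mu(f)-\dim_{\mathbb{C}}\mathcal{O}_n/(Jf+\mathcal{I})+\dim_{\mathbb{C}}(\mathcal{I}\cap Jf)/(\mathcal{I}Jf)=\dim_{\mathbb{C}}\mathcal{I}/(\mathcal{I}Jf)$ while $\mu_{BR}(f,X)-\overline{\mu_{BR}}(f,X)=\dim_{\mathbb{C}}\mathcal{I}/(\mathcal{I}\cap df(\Theta_X))$, the theorem is, given Theorem \ref{BRICIS}, exactly equivalent to the single equality $\mathcal{I}\cap df(\Theta_X)=\mathcal{I}Jf$. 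Your framework correctly isolates this quantity as the term $\dim_{\mathbb{C}}(\mathcal{I}\cap df(\Theta_X))/(\mathcal{I}Jf)$.

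The genuine gap is that everything difficult has been pushed into $(\star\star)$ and into the identification $df(\Theta_X^{0})=J_{k+1}(g,f)$, and neither is proved. The identity $(\star\star)$ simultaneously packages the relative formula of Theorem \ref{BRICIS} and the key lemma $\mathcal{I}\cap df(\Theta_X)=\mathcal{I}Jf$; these are precisely the two main theorems of \cite{lima2023bruce}, and there they require serious commutative algebra (Cohen--Macaulayness of the relevant modules, e.g.\ of $\mathcal{I}/\mathcal{I}Jf$ over $\mathcal{O}_n/Jf$, and an analysis of $\Theta_X$ modulo the trivial and Hamiltonian-type fields), not a snake-lemma chase. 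In particular, the vanishing of $\dim_{\mathbb{C}}(\mathcal{I}\cap df(\Theta_X))/(\mathcal{I}Jf)$ is a nontrivial ideal-membership statement about $f$ and $X$ jointly; no diagram chase relating $\Theta_X/\Theta_X^{0}$ to the Tjurina module will produce it, because that quotient only sees the first summand of $(\star\star)$. Separately, the inclusion $df(\Theta_X^{0})\subseteq J_{k+1}(g,f)$ requires that $\ker\bigl(Dg:\mathcal{O}_n^{\,n}\to\mathcal{O}_n^{\,k}\bigr)$ be generated by the Cramer (maximal-minor) syzygies, which holds only under a grade condition on $J_k(g)$ that is not automatic for an arbitrary presentation of an ICIS; this needs justification rather than the parenthetical acknowledgement you give it. A cleaner route, staying within the present paper, is to take Theorem \ref{BRICIS} as given and reduce the whole statement to the lemma $\mathcal{I}\cap df(\Theta_X)=\mathcal{I}Jf$, which is then the one thing left to prove.
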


Considering the relative Bruce-Roberts number, the above expression is simplified to:

\begin{theorem}[\cite{lima2023bruce}] \label{BRICIS}Let $(X,0)$ be an ICIS and $f \in \mathcal{O}_n$ such that $\overline{\mu_{BR}}(f,X)< \infty$. Then,
\begin{equation*}
\overline{\mu_{BR}}(f,X) = \mu(X \cap f^{-1}(0),0) + \mu(X,0) - \tau(X,0).
\end{equation*}
\end{theorem}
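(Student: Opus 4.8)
The plan is to pass to the local ring $\mathcal{O}_X := \mathcal{O}_n/\mathcal{I}(X,0)$ of the ICIS, where the relative Bruce--Roberts number acquires a transparent geometric meaning as the colength of a critical-scheme ideal, and then to match it against the Lê--Greuel formula for $\mu(X\cap f^{-1}(0),0)+\mu(X,0)$, the discrepancy being accounted for by the Tjurina number read as $\dim_{\mathbb{C}}T^1_X$.

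First I would record the reduction. Writing $\mathcal{I}(X,0)=\langle g_1,\dots,g_k\rangle$ and letting $\overline{Dg}\colon\mathcal{O}_X^{\,n}\to\mathcal{O}_X^{\,k}$ be the Jacobian matrix of $g=(g_1,\dots,g_k)$ read over $\mathcal{O}_X$, a field $\xi\in\Theta_n$ lies in $\Theta_X$ exactly when $\overline{Dg}\,\bar\xi=0$; hence the restriction $\Theta_X\to\ker\overline{Dg}=\mathrm{Der}_{\mathbb{C}}(\mathcal{O}_X)$ is surjective with kernel $\mathcal{I}(X,0)\Theta_n$. Quotienting the definition of $\overline{\mu_{BR}}$ by $\mathcal{I}(X,0)$ then gives
\[
\overline{\mu_{BR}}(f,X)=\dim_{\mathbb{C}}\frac{\mathcal{O}_X}{\overline{Df}\,(\ker\overline{Dg})},
\]
so that the relative Bruce--Roberts number is the colength of the ideal $\overline{Df}(\ker\overline{Dg})\subset\mathcal{O}_X$ cutting out the scheme of critical points of $f$ along the derivation-theoretic tangent directions of $X$. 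Since $\overline{\mu_{BR}}(f,X)<\infty$ is equivalent to $f$ having an isolated singularity on $X$, the map $F:=(g_1,\dots,g_k,f)$ defines the ICIS $X\cap f^{-1}(0)$, which is what makes the next step available.

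Next I would invoke the Lê--Greuel formula of Section~\ref{ICIS} applied to $F$ and to its truncation $g$, obtaining
\[
\mu(X\cap f^{-1}(0),0)+\mu(X,0)=\dim_{\mathbb{C}}\frac{\mathcal{O}_n}{\mathcal{I}(X,0)+J_{k+1}(F)}=\dim_{\mathbb{C}}\frac{\mathcal{O}_X}{\overline{J_{k+1}(F)}},
\]
where $\overline{J_{k+1}(F)}$ is the image in $\mathcal{O}_X$ of the ideal of maximal minors of the Jacobian of $F$. The bridge between the two displayed colengths is Cramer's rule: expanding each $(k+1)\times(k+1)$ minor of the Jacobian of $F$ along the $f$-row exhibits it as $\overline{Df}$ applied to the Cramer syzygy whose entries are the signed $k\times k$ minors of $\overline{Dg}$, and every such syzygy lies in $\ker\overline{Dg}$. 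Hence $\overline{J_{k+1}(F)}\subseteq\overline{Df}(\ker\overline{Dg})$, and the theorem reduces to the single identity
\[
\dim_{\mathbb{C}}\frac{\overline{Df}(\ker\overline{Dg})}{\overline{J_{k+1}(F)}}=\tau(X,0).
\]

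The main obstacle is precisely this last identity. I would attack it by setting $\Sigma\subseteq\ker\overline{Dg}$ to be the submodule generated by the Cramer syzygies, so that $\overline{J_{k+1}(F)}=\overline{Df}(\Sigma)$ and the quotient of ideals is the image under $\overline{Df}$ of the module $\ker\overline{Dg}/\Sigma$. A Koszul/Tor computation for the complete intersection $g$ is then meant to identify this module with the Tjurina module $T^1_X=\mathrm{coker}(\overline{Dg}\colon\mathcal{O}_X^{\,n}\to\mathcal{O}_X^{\,k})$, whose dimension is $\tau(X,0)$; concretely, the extra derivations of $\mathcal{O}_X$ beyond the Cramer syzygies (for instance the Euler field in the quasi-homogeneous case) are exactly what the deformation module $T^1_X$ records. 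The remaining subtlety is to show that $\overline{Df}$ transports dimensions faithfully, i.e. that it induces an isomorphism $\ker\overline{Dg}/\Sigma\cong\overline{Df}(\ker\overline{Dg})/\overline{J_{k+1}(F)}$; here the hypothesis that $f$ is $\mathcal{R}_X$-finitely determined enters as a genericity (conservation-of-number) condition ensuring that no cancellation occurs. Verifying this isomorphism together with the Tor-identification is where essentially all the work lies; once both are in place, the three displayed equalities combine to give $\overline{\mu_{BR}}(f,X)=\mu(X\cap f^{-1}(0),0)+\mu(X,0)-\tau(X,0)$.
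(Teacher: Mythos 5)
The paper itself offers no proof of this statement: Theorem~\ref{BRICIS} is imported verbatim from \cite{lima2023bruce}, so there is no internal argument to compare yours against, and I can only judge your proposal on its own terms. Your reduction is correct as far as it goes: checking tangency on the generators $g_1,\dots,g_k$ suffices for a reduced germ, every derivation of $\mathcal{O}_X$ lifts to $\Theta_X$, so the image of $\Theta_X$ in $\mathcal{O}_X^n$ is indeed $\ker\overline{Dg}$ and $\overline{\mu_{BR}}(f,X)$ is the colength of $\overline{Df}(\ker\overline{Dg})$ in $\mathcal{O}_X$; the Lê--Greuel formula applies to $F=(g_1,\dots,g_k,f)$ because $\overline{\mu_{BR}}(f,X)<\infty$ makes $X\cap f^{-1}(0)$ an ICIS; and the Laplace expansion correctly gives $\overline{J_{k+1}(F)}\subseteq\overline{Df}(\ker\overline{Dg})$. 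This legitimately reduces the theorem to the identity $\dim_{\mathbb{C}}\overline{Df}(\ker\overline{Dg})/\overline{J_{k+1}(F)}=\tau(X,0)$.

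The problem is that the two claims into which you split this identity carry the entire content of the theorem, and neither is proved. (i) The isomorphism $\ker\overline{Dg}/\Sigma\cong T^1_X$ is not ``a Koszul/Tor computation'': $\ker\overline{Dg}/\Sigma$ is the first homology of the Buchsbaum--Rim complex of the $k\times n$ matrix $\overline{Dg}$, which fails to be acyclic precisely because the ideal of its maximal minors has height $n-k$ on $\mathcal{O}_X$, one below the Eagon--Northcott bound; identifying that homology requires local duality over the Gorenstein ring $\mathcal{O}_X$, and the module one actually gets is the Matlis dual of $T^1_X$ rather than $T^1_X$ itself (same length, so your formula survives, but the discrepancy shows the computation was not carried out). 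This identification is essentially the main technical theorem of the source you would be reproving. (ii) Injectivity of the surjection $\ker\overline{Dg}/\Sigma\to\overline{Df}(\ker\overline{Dg})/\overline{J_{k+1}(F)}$ is equivalent to $\ker\overline{DF}\subseteq\Sigma$ for the full $(k+1)\times n$ Jacobian $\overline{DF}$, and ``conservation of number'' is not an argument here: nothing is being deformed, you are comparing two fixed submodules of $\mathcal{O}_X^n$. What actually works is that the isolated singularity of $f|_X$ forces the ideal of $(k+1)$-minors of $\overline{DF}$ to attain the maximal height $n-k$, so the Buchsbaum--Rim complex of $\overline{DF}$ is acyclic, $\ker\overline{DF}$ is generated by the Cramer syzygies of $\overline{DF}$, and a further Laplace expansion places those inside $\Sigma$. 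Both gaps are fillable, but as written your text asserts the two decisive facts rather than proving them, so this is a plan for a proof, not a proof.
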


Based on these results, we can establish connections with other discussed invariants. In the literature, connections between the Bruce-Roberts number and the local Euler obstruction are documented, as in \cite{grulha2009euler}, and with the Chern Obstruction as demonstrated in \cite{lima2023relative}. The following proposition enables us to compute these invariants using the algebraic formulas provided by the Bruce-Roberts number and the relative Bruce-Roberts number.

\begin{proposition}
Let $f: (\mathbb{C}^n,0) \rightarrow (\mathbb{C},0)$ be a germ of holomorphic function with isolated singularity over the ICIS $(X,0) \subset (\mathbb{C}^n,0)$ of dimension $d > 2$. Then, the relative Bruce-Roberts number $\overline{\mu_{BR}}(f,X)$ is equal to the following equivalent expressions:
\begin{itemize}
\item[(a)] $ind_{GSV}(\nabla f,X,0) - \tau(X,0)$;
\item[(b)] $\mu(X \cap f^{-1}(0),0) + m_d(X) + (-1)^d({\rm Eu}_X(0) -1) - \tau(X,0)$;
\item[(c)] $\mu(X,0) + \mu(X \cap p^{-1}(0),0) + (-1)^d {\rm Eu}_{f,X}(0) - \tau(X,0)$, where $p: \mathbb{C}^n \rightarrow \mathbb{C}$ is a linear generic function with respect to $X$;
\item[(d)] $m_d(X) + (-1)^d {\rm Eu}_{f,X}(0) - \tau(X,0)$;
\item[(e)] $\mu(X,0) + (-1)^{d-1}({\rm B}_{f,X}(0) -1) - \tau(X,0)$.
\end{itemize}
\end{proposition}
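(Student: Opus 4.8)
The plan is to reduce everything to Theorem~\ref{BRICIS} and then recognise the resulting sum of Milnor numbers as the common value of the equivalent expressions in Theorem~\ref{invariantsofmap}. Concretely, since $f$ has an isolated singularity over $X$ it is $\mathcal{R}_X$-finitely determined, so Theorem~\ref{BRICIS} gives
\begin{equation*}
\overline{\mu_{BR}}(f,X) = \mu(X \cap f^{-1}(0),0) + \mu(X,0) - \tau(X,0).
\end{equation*}
The key observation is that the quantity $\mu(X,0) + \mu(X \cap f^{-1}(0),0)$ is exactly item (b) of Theorem~\ref{invariantsofmap} once one reads that statement with its $d$-dimensional ICIS $Y$ replaced by our $X$ and the component $f_1$ replaced by $f$. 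Thus all the remaining items of that theorem compute one and the same integer, and substituting each of them in turn into the displayed formula, after subtracting the common term $\tau(X,0)$, yields the five expressions of the proposition.

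I would then carry out the substitution item by item. Item (a) of Theorem~\ref{invariantsofmap} gives $\mu(X,0)+\mu(X\cap f^{-1}(0),0)=ind_{GSV}(\nabla f,X,0)$, whence (a). Item (d) gives $\mu(X\cap f^{-1}(0),0)+m_d(X)+(-1)^d({\rm Eu}_X(0)-1)$, yielding (b); item (e) gives $\mu(X,0)+\mu(X\cap p^{-1}(0),0)+(-1)^d {\rm Eu}_{f,X}(0)$ for a generic linear $p$, yielding (c); item (f) gives $m_d(X)+(-1)^d {\rm Eu}_{f,X}(0)$, yielding (d); and item (g) gives $\mu(X,0)+(-1)^{d-1}({\rm B}_{f,X}(0)-1)$, yielding (e). Note that only the items of Theorem~\ref{invariantsofmap} intrinsic to the $d$-dimensional slice are used; item (c) there, which involves the Euler obstruction of the ambient $(d+1)$-dimensional germ, has no counterpart here, which is precisely why no such term appears in the proposition.

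The main obstacle is justifying that the equivalences of Theorem~\ref{invariantsofmap} are available for our $(X,0)$, since that theorem is phrased for a $d$-dimensional $Y=X\cap H$ arising as the hyperplane section of a $(d+1)$-dimensional ICIS cut by a linear $f_2$. I would address this by placing $(X,0)$ in that framework: starting from defining equations $g_1,\dots,g_k$ of $X$ in $(\mathbb{C}^n,0)$, one builds a $(d+1)$-dimensional ICIS $\widetilde{X}\subset(\mathbb{C}^{n+1},0)$ having $X$ as the slice $\widetilde{X}\cap\{x_{n+1}=0\}$, takes $f_2=x_{n+1}$ and an extension $F_1$ of $f$, and checks that the hypotheses—$\widetilde X$ an ICIS with isolated singularity and $F_1|_X=f$ of isolated singularity—hold for a generic choice, so that the listed formulas are intrinsic to $X$ and independent of this realisation. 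The delicate points are guaranteeing that the suspended total space has isolated singularity (the naive cylinder $X\times\mathbb{C}$ fails) and that the auxiliary slices $X\cap f^{-1}(0)$ and $X\cap p^{-1}(0)$ remain ICIS of the expected dimension; this is where the hypothesis $d>2$ enters, ensuring all the sections appearing in (a)--(e) are positive-dimensional ICIS so that the polar-multiplicity and Euler-obstruction identities of Theorem~\ref{invariantsofmap} apply verbatim.
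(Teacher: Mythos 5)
Your proposal is correct and follows essentially the same route as the paper: invoke Theorem~\ref{BRICIS} to write $\overline{\mu_{BR}}(f,X)=\mu(X,0)+\mu(X\cap f^{-1}(0),0)-\tau(X,0)$ and then identify $\mu(X,0)+\mu(X\cap f^{-1}(0),0)$ with the common value of the equivalent expressions in Theorem~\ref{invariantsofmap}, read with $Y=X$ and $f_1=f$. The one delicate point you flag --- producing a $(d+1)$-dimensional ICIS containing $X$ as a hyperplane slice, since the naive cylinder fails --- is resolved in the paper exactly as you anticipate, by taking the total space $(\mathfrak{X},0)$ of a smoothing of $(X,0)$ with $f_2=\pi$ the (linear) deformation parameter and $f_1=\tilde f$ an extension of $f$.
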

\begin{proof}
   Consider a $1$-parameter deformation of $(X,0)$ given by a family $(\mathfrak{X},0)$ of smoothings, with the projection $\pi: (\mathfrak{X},0) \rightarrow (\mathbb{C},0)$.

\[
\xymatrix{
(X,0) \ar[d] \ar@{^{(}->}[r] & (\mathfrak{X},0) \ar[d]^{\pi} \ar@{^{(}->}[r] & (\mathbb{C}^n \times \mathbb{C},0) \ar[dl] \\
\{0\} \ar@{^{(}->}[r] & (\mathbb{C},0) &
}
\]

Let $\tilde{f}: (\mathfrak{X},0) \rightarrow (\mathbb{C},0)$ be a germ of holomorphic function such that $\tilde{f}_{\lvert{X}} = {f}_{\lvert{X}}$. Now, consider the map $g = (\tilde{f}, \pi): (\mathfrak{X},0) \rightarrow (\mathbb{C}^2,0)$. The Theorem \ref{invariantsofmap} guarantees that the Euler obstruction ${\rm Eu}_{g, \mathfrak{X} \cap \pi^{-1}(\delta)}(x)$ of $g$ at a point $x \in \mathfrak{X} \cap \pi^{-1}(\delta)$ is given by
\begin{equation*}
{\rm Eu}_{g, \mathfrak{X} \cap \pi^{-1}(\delta)}(x) = \mu(X,0) + \mu(X \cap f^{-1}(0),0).
\end{equation*}

Therefore, Theorem \ref{BRICIS} ensures that
\begin{equation*}
\overline{\mu_{BR}}(f,X) = {\rm Eu}_{g, \mathfrak{X} \cap \pi^{-1}(\delta)}(x) - \tau(X,0).
\end{equation*}

The desired expressions follow directly from the relations obtained in Theorem \ref{invariantsofmap}. 
\end{proof}

Using Theorem \ref{BruceRoberts}, we obtain analogous expressions for the Bruce-Roberts number.

\begin{proposition}
Let $f: (\mathbb{C}^n,0) \rightarrow (\mathbb{C},0)$ be a germ of holomorphic function with isolated singularity over the ICIS $(X,0) \subset (\mathbb{C}^n,0)$ of dimension $d > 2$. Then, the Bruce-Roberts number $\mu_{BR}(f,X)$ is equal to the following equivalent expressions:
\begin{itemize}
\item[(a)] $\mu(f) + ind_{GSV}(\nabla f,X,0) - \tau(X,0) - dim_{\mathbb{C}}\dfrac{\mathcal{O}_n}{Jf + \mathcal{I}(X,0)} + dim_{\mathbb{C}}\dfrac{\mathcal{I}(X,0) \cap Jf}{\mathcal{I}(X,0)Jf}$;
 
\item[(b)] $\mu(f) + \mu(X \cap f^{-1}(0),0) + m_d(X) + (-1)^d({\rm Eu}_X(0) -1) - \tau(X,0) - dim_{\mathbb{C}}\dfrac{\mathcal{O}_n}{Jf + \mathcal{I}(X,0)} + dim_{\mathbb{C}}\dfrac{\mathcal{I}(X,0) \cap Jf}{\mathcal{I}(X,0)Jf}$;
 
\item[(c)] $\mu(f) + \mu(X,0) + \mu(X \cap p^{-1}(0),0) + (-1)^d {\rm Eu}_{f,X}(0) - \tau(X,0) - dim_{\mathbb{C}}\dfrac{\mathcal{O}_n}{Jf + \mathcal{I}(X,0)} + dim_{\mathbb{C}}\dfrac{\mathcal{I}(X,0) \cap Jf}{\mathcal{I}(X,0)Jf}$, where $p: \mathbb{C}^n \rightarrow \mathbb{C}$ is a linear generic function with respect to $X$;
 
	\item[(d)] $\mu(f) + m_d(X) + (-1)^d{\rm Eu}_{f,X}(0) - \tau(X,0) - dim_{\mathbb{C}}\dfrac{\mathcal{O}_n}{Jf + \mathcal{I}(X,0)} + dim_{\mathbb{C}}\dfrac{\mathcal{I}(X,0) \cap Jf}{\mathcal{I}(X,0)Jf}$;
	\item[(e)] $\mu(f) + \mu(X,0) + (-1)^{d-1}({\rm B}_{f,X}(0) -1) - \tau(X,0) - dim_{\mathbb{C}}\dfrac{\mathcal{O}_n}{Jf + \mathcal{I}(X,0)} + dim_{\mathbb{C}}\dfrac{\mathcal{I}(X,0) \cap Jf}{\mathcal{I}(X,0)Jf}$.
\end{itemize}
\end{proposition}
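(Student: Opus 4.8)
The plan is to reduce the statement entirely to the relative case already settled in the preceding Proposition. The key observation is that Theorems~\ref{BruceRoberts} and~\ref{BRICIS} differ only by a correction term that is independent of every geometric invariant appearing in the list. Concretely, subtracting the formula of Theorem~\ref{BRICIS} from that of Theorem~\ref{BruceRoberts} yields
\begin{equation*}
\mu_{BR}(f,X) = \overline{\mu_{BR}}(f,X) + \mu(f) - \dim_{\mathbb{C}}\dfrac{\mathcal{O}_n}{Jf + \mathcal{I}(X,0)} + \dim_{\mathbb{C}}\dfrac{\mathcal{I}(X,0) \cap Jf}{\mathcal{I}(X,0)Jf}.
\end{equation*}
Both theorems are applicable because the hypotheses agree: as recalled earlier, $f$ having an isolated singularity over $(X,0)$ is equivalent to $\mathcal{R}_X$-finite determinacy, which is exactly what Theorem~\ref{BruceRoberts} requires.

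First I would record the displayed identity above as the single structural input of the proof. Then I would invoke the preceding Proposition, which expresses $\overline{\mu_{BR}}(f,X)$ in the five equivalent forms (a)--(e) under precisely the present hypotheses, namely dimension $d>2$ and isolated singularity of $f$ over $X$. Substituting each of those five expressions for $\overline{\mu_{BR}}(f,X)$ into the identity, and appending the common correction term $\mu(f) - \dim_{\mathbb{C}}\frac{\mathcal{O}_n}{Jf + \mathcal{I}(X,0)} + \dim_{\mathbb{C}}\frac{\mathcal{I}(X,0)\cap Jf}{\mathcal{I}(X,0)Jf}$, produces exactly items (a)--(e) of the present statement. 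The correspondence is term-by-term: item (a) here pairs with item (a) there, (b) with (b), and so on, so no further computation is needed beyond transcribing the correction term.

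The main point to watch is bookkeeping rather than any genuine obstacle. All the geometric content---the identifications with $ind_{GSV}(\nabla f,X,0)$, the polar multiplicity $m_d(X)$, the local Euler obstructions ${\rm Eu}_X(0)$ and ${\rm Eu}_{f,X}(0)$, and the Brasselet number ${\rm B}_{f,X}(0)$---is already absorbed into the preceding Proposition through Theorem~\ref{invariantsofmap}, so the only new ingredient is the purely algebraic difference $\mu_{BR}-\overline{\mu_{BR}}$ furnished by Theorem~\ref{BruceRoberts}. I would verify once that this correction term is identical across all five items, as it must be since it does not reference the chosen geometric description, and confirm that the dimension restriction $d>2$ and the sign conventions inherited from the preceding Proposition remain in force throughout.
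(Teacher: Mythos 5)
Your proposal is correct and matches the paper's (implicit) argument: the paper derives these expressions simply by ``using Theorem~\ref{BruceRoberts}'' together with the preceding proposition, which is exactly your reduction $\mu_{BR}(f,X) = \overline{\mu_{BR}}(f,X) + \mu(f) - \dim_{\mathbb{C}}\frac{\mathcal{O}_n}{Jf + \mathcal{I}(X,0)} + \dim_{\mathbb{C}}\frac{\mathcal{I}(X,0)\cap Jf}{\mathcal{I}(X,0)Jf}$ followed by term-by-term substitution of the five relative expressions.
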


In the context of hypersurfaces, the authors of \cite{nuno2020bruce} and \cite{kourliouros2021milnor} independently presented relationships among these Milnor numbers. Combining the findings, we additionally observe:

\begin{corollary}
    If $(X,0) \subset (\mathbb{C}^n,0)$ is a hypersurface singularity of dimension $d>2$, then the Bruce-Roberts number $\mu_{BR}(f,X)$ is equal to the following equivalent expressions:
\begin{itemize}
\item[(a)] $\mu(f) + ind_{GSV}(\nabla f,X,0) - \tau(X,0)$;
\item[(b)] $\mu(f) + \mu(X \cap f^{-1}(0),0) + m_d(X) + (-1)^d({\rm Eu}_X(0) -1) - \tau(X,0)$;
\item[(c)] $\mu(f) + \mu(X,0) + \mu(X \cap p^{-1}(0),0) + (-1)^d {\rm Eu}_{f,X}(0) - \tau(X,0)$, where $p: \mathbb{C}^n \rightarrow \mathbb{C}$ is a linear generic function with respect to $X$;
\item[(d)] $\mu(f) + m_d(X) + (-1)^d {\rm Eu}_{f,X}(0) - \tau(X,0)$;
\item[(e)] $\mu(f) + \mu(X,0) + (-1)^{d-1}({\rm B}_{f,X}(0) -1) - \tau(X,0)$.
\end{itemize}
\end{corollary}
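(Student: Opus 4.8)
The plan is to deduce the Corollary directly from the preceding Proposition by showing that, for a hypersurface, the two correction terms
\[
-\dim_{\mathbb{C}}\frac{\mathcal{O}_n}{Jf + \mathcal{I}(X,0)} + \dim_{\mathbb{C}}\frac{\mathcal{I}(X,0) \cap Jf}{\mathcal{I}(X,0)Jf}
\]
that appear in every one of the expressions (a)--(e) of that Proposition cancel. Since the dimension hypothesis $d>2$ and the isolated-singularity assumption carry over verbatim, the entire task reduces to establishing the single algebraic identity
\[
\dim_{\mathbb{C}}\frac{\mathcal{I}(X,0) \cap Jf}{\mathcal{I}(X,0)Jf} = \dim_{\mathbb{C}}\frac{\mathcal{O}_n}{Jf + \mathcal{I}(X,0)}
\]
in the hypersurface case. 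Once this is in hand, each of the five expressions of the Proposition collapses to the corresponding simplified expression claimed in the Corollary.

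To prove the identity I would exploit the two features special to a hypersurface: the defining ideal $\mathcal{I}(X,0) = \langle g \rangle$ is principal, and $g$ is a non-zero-divisor in the domain $\mathcal{O}_n$. First I would observe that multiplication by $g$ is injective and sends $Jf$ onto $g\cdot Jf = \mathcal{I}(X,0)Jf$, which yields an isomorphism of $\mathbb{C}$-vector spaces $\mathcal{O}_n/Jf \cong \langle g \rangle /(g\cdot Jf)$. Then, using the inclusions $g\cdot Jf \subseteq \langle g \rangle \cap Jf \subseteq \langle g \rangle$, I would set up the short exact sequence
\[
0 \to \frac{\langle g \rangle \cap Jf}{g\cdot Jf} \to \frac{\langle g \rangle}{g\cdot Jf} \to \frac{\langle g \rangle}{\langle g \rangle \cap Jf} \to 0,
\]
and identify the rightmost term with $(\langle g \rangle + Jf)/Jf$ via the second isomorphism theorem.

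Combining the isomorphism above with additivity of $\dim_{\mathbb{C}}$ along this sequence produces one formula for $\dim_{\mathbb{C}}(\mathcal{O}_n/Jf)$; computing the same dimension directly through the tower $Jf \subseteq Jf + \langle g \rangle \subseteq \mathcal{O}_n$ produces a second. Both formulas share the common summand $\dim_{\mathbb{C}}\bigl((\langle g \rangle + Jf)/Jf\bigr)$, so subtracting one from the other isolates precisely the desired identity. This is exactly the simplification recorded for hypersurfaces in \cite{nuno2020bruce} and \cite{kourliouros2021milnor}: their formula $\mu_{BR}(f,X) = \mu(f) + \mu(X\cap f^{-1}(0),0) + \mu(X,0) - \tau(X,0)$ is Theorem \ref{BruceRoberts} with the two correction terms removed, and the Corollary then follows by substituting this clean value of $\mu_{BR}(f,X)$ into the chain of equivalences provided by Theorem \ref{invariantsofmap}.

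I expect the main obstacle to be bookkeeping rather than conceptual. One must confirm that every module occurring in the argument is finite-dimensional over $\mathbb{C}$, so that additivity of dimension along the short exact sequence is legitimate; this follows from the $\mathcal{R}_X$-finite determinacy of $f$ together with the isolated-singularity hypothesis. One must also keep careful track of the inclusions that make the sequence exact and of which quotient the isomorphism $\mathcal{O}_n/Jf \cong \langle g \rangle/(g\cdot Jf)$ actually identifies. The conceptually decisive point is the non-zero-divisor property of $g$: it is what makes the principal case special, and it is precisely the step that fails for a general ICIS, which explains why the two correction terms survive there but vanish for a hypersurface.
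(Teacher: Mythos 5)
Your proposal is correct and follows essentially the same route as the paper: the paper obtains the corollary by combining the preceding proposition with the hypersurface simplification of the Bruce--Roberts formula ($\mu_{BR}(f,X)=\mu(f)+\mu(X\cap f^{-1}(0),0)+\mu(X,0)-\tau(X,0)$), which it attributes to \cite{nuno2020bruce} and \cite{kourliouros2021milnor} rather than proving. Your direct verification of the cancellation $\dim_{\mathbb{C}}\bigl((\langle g\rangle\cap Jf)/(g\,Jf)\bigr)=\dim_{\mathbb{C}}\bigl(\mathcal{O}_n/(Jf+\langle g\rangle)\bigr)$ via multiplication by the non-zero-divisor $g$ and the two dimension counts is sound (finiteness follows since $df(\Theta_X)\subseteq Jf$ forces $\mu(f)<\infty$), so you have simply made explicit the step the paper delegates to the references.
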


In their work \cite{bivia2022bruce}, the authors delve into the investigation of the so-called \textit{Bruce-Roberts Tjurina number} of the function $f$ concerning the variety $(X,0)$, particularly when there exists a coordinate system such that both $f$ and $X$ are quasihomogeneous with respect to the same positive rational weights. The Bruce-Roberts Tjurina number is formally defined as follows:
\begin{equation*}
\tau_{BR}(f,X):= \dim_{\mathbb{C}}\dfrac{\mathcal{O}n}{df(\Theta_{X}) + \left< f \right>}.
\end{equation*}

This quantity characterizes the infinitesimal deformations of $f$ under \linebreak $\mathcal{K}_X$-equivalence, representing diffeomorphisms that preserve $X$ and allow for the multiplication of $f$ by units in $\mathcal{O}_n$. The authors have shown that the equality \linebreak $\mu_{BR}(f,X) = \tau_{BR}(f,X)$ is indicative of the relative quasihomogeneity of the pair $(f,X)$. In such cases, the Bruce-Roberts Tjurina number can also be computed using the expressions provided above.

\section*{Acknowledgments}

\end{document}